\theoremstyle{plain}
\newtheorem{theorem}{Theorem}[section]
\newtheorem{lemma}[theorem]{Lemma}
\theoremstyle{remark}
\newtheorem{remark}{Remark}[section]
\numberwithin{equation}{section}
\def\Om{\Omega}
\def\e{\varepsilon}
\def\g{\gamma}
\def\G{\Gamma}
\def\l{\lambda}
\def\p{\partial}
\def\D{\Delta}
\def\k{\varkappa}
\def\E{\mbox{\rm e}}
\def\a{\alpha}
\def\b{\beta}
\def\Odr{\mathcal{O}}
\def\H{W_2}
\def\Hinf{W_\infty}
\def\di{\,d}
\def\I{\mathrm{I}}
\def\iu{\mathrm{i}}
\def\Op{\mathcal{H}}
\DeclareMathOperator{\RE}{Re}
\DeclareMathOperator{\IM}{Im}
\newcounter{assumption}
\begin{document}

\title{\textbf{Eigenvalues collision for $\mathcal{PT}$-symmetric waveguide}}
\author{Denis Borisov$^a$}
\date{\small
\begin{center}
\begin{quote}
{\it
Institute of Mathematics of Ufa Scientific Center of RAS, Chernyshevskogo, str. 112, 450008, Ufa, Russian Federation \&
Bashkir State Pedagogical University,
October St.~3a, 450000, Ufa,
Russian Federation; E-mail: \texttt{borisovdi@yandex.ru}
}
\end{quote}
\end{center}}

 \maketitle

\begin{abstract}
We consider a model of planar $\mathcal{PT}$-symmetric waveguide and study the phenomenon of the eigenvalues collision under the perturbation of boundary conditions. This phenomenon was discovered numerically in previous works. The main result of this work is an analytic explanation of this phenomenon. 
\end{abstract}

\section{Introduction and main results}

In this paper we study a problem in the theory of $\mathcal{PT}$-symmetric operators which is being studied rather intensively after pioneering works \cite{BB}, \cite{M1}, \cite{M2}, \cite{M3}, \cite{M4}, \cite{Z1}, \cite{Z2}, \cite{Z3}, \cite{Z4}, \cite{B}. Our model is introduced as follows.

Let $x=(x_1,x_2)$ be Cartesian coordinates in $\mathds{R}^2$, $\Om$ be the strip $\{x: -d<x_2<d\}$, $d>0$, $\a=\a(x_1)$ be a function in $\Hinf^1(\mathds{R})$. We consider the operator $\Op_\a$ in $L_2(\Om)$ acting as $\Op_\a u=-\D u$ on the functions $u\in\H^2(\Om)$ satisfying non-Hermitian boundary conditions
\begin{equation}\label{1.1}
\left(\frac{\p\hphantom{x}}{\p x_2}+\iu\a\right)u=0\quad\text{on}\quad\p\Om.
\end{equation}
It was shown in \cite{IEOP08} that this operator is $m$-sectorial, densely defined, and $\mathcal{PT}$-symmetric, namely,
\begin{equation}\label{1.2}
\mathcal{PT}\Op_\a=\Op_\a\mathcal{PT},
\end{equation}
where $(\mathcal{P}u)(x)=u(x_1,-x_2)$, and $\mathcal{T}$ is the operator of complex conjugation, $\mathcal{T}u=\overline{u}$. It was also proven in \cite{IEOP08} that
\begin{equation}\label{1.3}
\Op_\a^*=\Op_{-\a},\quad \Op_\a^*=\mathcal{T}\Op_\a\mathcal{T}=\mathcal{P}\Op_\a\mathcal{P}.
\end{equation}

A non-trivial question related to $\Op_\a$ is the behavior of its eigenvalues. As $\a(x_1)$ is a small regular localized perturbation of a constant function, in \cite{IEOP08} there were obtained sufficient conditions for existence and absence of  isolated eigenvalues near the threshold of the essential spectrum. Similar results for both regularly and singularly perturbed models were obtained in \cite{IMM12}, \cite{UMZh14}, \cite{AP07}, \cite{AsAn12}, \cite{KS}.

Numerical experiments performed in \cite{KS}, \cite{KT} brought a very non-trivial picture of the eigenvalues distribution. One of the interesting phenomenon discovered numerically in \cite{KS}, \cite{KT} was the eigenvalues collision. Namely, let $t\in \mathds{R}$ is a parameter, then as $t$ increases, the operator $\Op_{t\a}$ can have two simple real isolated eigenvalues meeting at some point. Then two cases are possible. In the first of them, these eigenvalues stay real as $t$ increases and they just pass along the real line. In the second case the eigenvalues become complex as $t$ increases and they are located symmetrically w.r.t. the real axis. The present paper is devoted to the analytic study of the described phenomenon.

Suppose $\l_0\in \mathds{R}$  is an isolated eigenvalue of $\Op_\a$, $\e$ is a small real parameter, $\b\in\Hinf^2(\mathds{R})$ is a some function. Denote $\G_\pm:=\{x: x_2=\pm d\}$. Our first main result describes the case when $\l_0$ is an eigenvalue of geometric multiplicity two.

\begin{theorem}\label{th1}
Assume $\l_0\in\mathds{R}$ is a double eigenvalue, $\psi_0^\pm$ are the associated eigenfunctions satisfying
\begin{equation}\label{1.9}
(\psi_0^\pm,\mathcal{T}\psi_0^\pm)_{L_2(\Om)}=1,\quad (\psi_0^+,\mathcal{T}\psi_0^-)_{L_2(\Om)}=0.
\end{equation}
Suppose also
\begin{gather}\label{1.4a}
(b_{11}-b_{22})^2+4b_{12}^2\not=0,
\\
\begin{aligned}
&b_{11}=\iu\int\limits_{\G_+} \b(\psi_0^+)^2\di x_1-\iu \int\limits_{\G_-} \b(\psi_0^+)^2\di x_1,
\\
&b_{22}=\iu\int\limits_{\G_+} \b(\psi_0^-)^2\di x_1-\iu \int\limits_{\G_-} \b(\psi_0^-)^2\di x_1,
\\
&b_{12}=\iu\int\limits_{\G_+} \b\psi_0^+\psi_0^-\di x_1-\iu \int\limits_{\G_-} \b\psi_0^+\psi_0^-\di x_1.
\end{aligned}\label{1.4b}
\end{gather}
Then for all sufficiently small $\e$ the operator $\Op_{\a+\e\b}$ has two simple isolated eigenvalues $\l_\e^\pm$ converging to $\l_0$ as $\e\to0$. These eigenvalues are holomorphic w.r.t. $\e$ and the first terms of their Taylor series are
\begin{align}
&\l_\e^\pm=\l_0+\e \l_1^\pm+\Odr(\e^2), \label{1.4}
\\
&\l_1^\pm=\frac{1}{2} (b_{11}+b_{22}) \pm \frac{1}{2} \big((b_{11}-b_{22})^2+4b_{12}^2\big)^{1/2}.\label{1.5}
\end{align}
\end{theorem}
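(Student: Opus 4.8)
The plan is to recast the perturbation of the boundary condition (1.1) as a holomorphic perturbation of the sesquilinear form generating $\Op_\a$, and then to run degenerate perturbation theory adapted to the non-self-adjoint, biorthogonal setting. First I would write the form explicitly: for $u,v\in\H^1(\Om)$, integration by parts together with (1.1) gives
\[ \mathfrak{h}_\a(u,v)=\int_\Om\nabla u\cdot\overline{\nabla v}\di x+\iu\int_{\G_+}\a u\overline{v}\di x_1-\iu\int_{\G_-}\a u\overline{v}\di x_1, \]
on the fixed form domain $\H^1(\Om)$. Replacing $\a$ by $\a+\e\b$ adds $\e\mathfrak{b}$, where
\[ \mathfrak{b}(u,v)=\iu\int_{\G_+}\b u\overline{v}\di x_1-\iu\int_{\G_-}\b u\overline{v}\di x_1. \]
By the trace inequality $\|u\|_{L_2(\p\Om)}^2\leqslant\delta\|\nabla u\|_{L_2(\Om)}^2+C_\delta\|u\|_{L_2(\Om)}^2$, valid for every $\delta>0$, the form $\mathfrak{b}$ is $\mathfrak{h}_\a$-bounded with relative bound zero. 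Hence $\{\Op_{\a+\e\b}\}$ is a holomorphic family of type (B) in the sense of Kato, so any finite system of eigenvalues separated from the rest of the spectrum depends holomorphically on $\e$.

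Next I would identify the dual eigenfunctions. By (1.3), $\Op_\a^*=\mathcal{T}\Op_\a\mathcal{T}$; applying $\mathcal{T}$ to $\Op_\a\psi_0^\pm=\l_0\psi_0^\pm$ and using $\l_0\in\mathds{R}$ shows that $\mathcal{T}\psi_0^\pm$ span the eigenspace of $\Op_\a^*$ at $\l_0$. The normalization (1.9) then states precisely that $(\psi_0^i,\mathcal{T}\psi_0^j)_{L_2(\Om)}=\delta_{ij}$, i.e. the two bases are biorthonormal. The nondegeneracy of this pairing also rules out Jordan chains: if $(\Op_\a-\l_0)\chi=\psi$ for some eigenfunction $\psi$, then for every $j$ one has $(\psi,\mathcal{T}\psi_0^j)=(\chi,(\Op_\a^*-\l_0)\mathcal{T}\psi_0^j)=0$, forcing $\psi=0$. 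Thus $\l_0$ is semisimple with algebraic multiplicity two.

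I would then form the Riesz projection $P(\e)=-\tfrac{1}{2\pi\iu}\oint_{\g}(\Op_{\a+\e\b}-z)^{-1}\di z$ along a small contour $\g$ enclosing only $\l_0$. For small $\e$ it is holomorphic and of rank two, and the eigenvalues of $\Op_{\a+\e\b}$ inside $\g$ are those of the reduced operator on $P(\e)L_2(\Om)$. Expanding the resolvent as $(\Op_{\a+\e\b}-z)^{-1}=R(z)-\e R(z)\,\mathfrak{b}\,R(z)+\Odr(\e^2)$ in the form sense and integrating over $\g$, I obtain that in the biorthonormal basis $\{\psi_0^+,\psi_0^-\}$, $\{\mathcal{T}\psi_0^+,\mathcal{T}\psi_0^-\}$ the reduced matrix equals $\l_0 I+\e M+\Odr(\e^2)$ with
\[ M_{ij}=\mathfrak{b}(\psi_0^j,\mathcal{T}\psi_0^i)=\iu\int_{\G_+}\b\,\psi_0^j\psi_0^i\di x_1-\iu\int_{\G_-}\b\,\psi_0^j\psi_0^i\di x_1, \]
where I used $\overline{\mathcal{T}\psi_0^i}=\psi_0^i$. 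These entries are exactly (1.4b); since the integrand is symmetric in the two indices, $M_{12}=M_{21}=b_{12}$, so $M$ is complex symmetric.

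Finally I would diagonalize $M$. The eigenvalues of $\left(\begin{smallmatrix}b_{11}&b_{12}\\ b_{12}&b_{22}\end{smallmatrix}\right)$ are $\tfrac12(b_{11}+b_{22})\pm\tfrac12\big((b_{11}-b_{22})^2+4b_{12}^2\big)^{1/2}$, which is (1.5). Condition (1.4a) makes the discriminant nonzero, hence $\l_1^+\neq\l_1^-$, so the two eigenvalues of the reduced matrix are simple at $\e=0$; by holomorphy of the family they split into two branches $\l_\e^\pm$ holomorphic in $\e$ with the stated leading terms (1.4). The main obstacle I anticipate is the rigorous treatment of the boundary perturbation at the operator level: justifying the resolvent expansion and the first-order formula when $\mathfrak{b}$ is only a form (not an operator) living on the boundary traces, and confirming that the correct dual vectors in the reduced matrix are the adjoint eigenfunctions $\mathcal{T}\psi_0^i$ dictated by (1.3) rather than the complex conjugates one would naively use in the self-adjoint case.
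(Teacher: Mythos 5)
Your argument is essentially correct, but it follows a genuinely different route from the paper's. You treat the moving boundary condition by working at the form level: since the form domain $\H^1(\Om)$ is $\e$-independent and the boundary form $\mathfrak{b}$ is relatively bounded with bound zero (trace inequality), $\{\Op_{\a+\e\b}\}$ is a holomorphic family of type (B), and you then run degenerate perturbation theory with the Riesz projection and the biorthogonal bases $\{\psi_0^\pm\}$, $\{\mathcal{T}\psi_0^\pm\}$; your verification of semisimplicity via the nondegenerate pairing (1.9), your identification of the adjoint eigenvectors through (\ref{1.3}), and your reduced matrix $M_{ij}=\mathfrak{b}(\psi_0^j,\mathcal{T}\psi_0^i)=b_{ij}$ all match the paper's data, and the distinctness of $\l_1^\pm$ guaranteed by (\ref{1.4a}) correctly upgrades the branches to single-valued holomorphic functions of $\e$ (mere type-(B) holomorphy alone would allow branch points). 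The paper instead removes the $\e$-dependence of the operator domain by the gauge transformation $\mathcal{U}_{\e\b}=\E^{-\iu\e\b x_2}$, which converts the boundary perturbation into an interior first-order differential operator $\e\mathcal{L}_\e$ acting on the fixed domain of $\Op_\a$ (formulas (\ref{2.1})--(\ref{2.1a})), and then applies a Birman--Schwinger-type reduction: inverting $(\Op_\a-\l_\e)$ via the resolvent pole structure of Lemma~\ref{lm2.3} (a simple pole, since your same unsolvability observation kills higher-order poles), applying the functionals $\ell_\pm\mathcal{L}_\e$ to get the $2\times2$ system (\ref{2.26}), and solving the determinant equation $z_\e=Q_\pm(z_\e,\e)$ by Rouch\'e's theorem and the implicit function theorem, with (\ref{1.4a}) ensuring the square root in $Q_\pm$ is holomorphic at the origin; the leading coefficients $b_{ij}$ emerge from an integration by parts as in (\ref{2.18}). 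The trade-off is exactly the obstacle you flag: your route leans on general type-(B) theory and requires justifying the first-order resolvent expansion when the perturbation exists only as a boundary form (standard but technical, cf.\ Kato Ch.~V\!I\!I, Sec.~4), whereas the gauge trick makes the perturbation an honest operator so that all expansions are elementary, and it provides the uniform framework the paper reuses for Theorem~\ref{th2}, where the resolvent has a second-order pole and the $\e^{1/2}$ expansion appears --- a case your projection-based scheme would need to handle separately.
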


The second main result is devoted to the case when the geometric multiplicity of $\l_0$ is one but the algebraic multiplicity is two.

\begin{theorem}\label{th2}
Let $\l_0\in\mathds{R}$ be a simple eigenvalue of $\Op_\a$, $\psi_0$ be the associated eigenfunction. Assume that the equation
\begin{equation}\label{1.6}
(\Op_\a-\l_0)\phi_0=\psi_0
\end{equation}
is solvable and there exists a solution satisfying
\begin{equation}\label{1.7}
(\phi_0,\mathcal{T}\psi_0)_{L_2(\Om)}\not=0,\quad (\phi_0,\psi_0)_{L_2(\Om)}=0.
\end{equation}
Then eigenfunction $\psi_0$ can be chosen so that
\begin{align}
(&\phi_0,\mathcal{T}\psi_0)_{L_2(\Om)}=1,\quad (\phi_0,\psi_0)_{L_2(\Om)}=0,\label{1.10a}
\\
&
\psi_0=\mathcal{P}\mathcal{T}\psi_0,\qquad\qquad \phi_0=\mathcal{P}\mathcal{T}\phi_0.\label{2.19}
\end{align}
Suppose then that this eigenfunction obeys the inequality
\begin{equation}\label{1.11}
\int\limits_{\G_+} \b\RE\psi_0\IM\psi_0\di x_1\not=0.
\end{equation}
Then for all sufficiently small $\e$ the operator $\Op_{\a+\e\b}$ has two simple isolated eigenvalues $\l_\e^\pm$ converging to $\l_0$ as $\e\to0$. These eigenvalues are real as
\begin{equation}\label{1.17}
\e\int\limits_{\G_+} \b\RE\psi_0\IM\psi_0\di x_1<0
\end{equation}
and are complex as
\begin{equation}\label{1.18}
\e\int\limits_{\G_+} \b\RE\psi_0\IM\psi_0\di x_1>0.
\end{equation}
Eigenvalues $\l_\e^\pm$ are holomorphic w.r.t. $\e^{1/2}$ and the first terms of their Taylor series read as
\begin{equation}\label{1.8}
\l_\e^\pm=\l_0+\e^{1/2}\l_{1/2}^\pm+\Odr(\e),
\quad
\l_{1/2}^\pm=\pm 2 \left(- \int\limits_{\G_+} \b\RE\psi_0\IM\psi_0\di x_1\right)^{1/2}.
\end{equation}
\end{theorem}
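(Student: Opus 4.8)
The plan is to recast the change of boundary condition as a holomorphic form perturbation and then reduce the spectral problem near $\l_0$ to a $2\times2$ matrix whose off-diagonal entry produces the square-root splitting. First I would write the sesquilinear form of $\Op_{\a+\e\b}$ as $\mathfrak{h}_{\a+\e\b}=\mathfrak{h}_\a+\e\mathfrak{b}$, where integrating $-\D u$ by parts against the boundary condition \eqref{1.1} gives
\begin{equation*}
\mathfrak{b}[u,v]=\iu\int_{\G_+}\b u\overline{v}\di x_1-\iu\int_{\G_-}\b u\overline{v}\di x_1 .
\end{equation*}
The technical ingredient here is the trace estimate $\|u\|_{L_2(\p\Om)}^2\le \d\|\nabla u\|_{L_2(\Om)}^2+C_\d\|u\|_{L_2(\Om)}^2$, which shows that $\mathfrak{b}$ is relatively bounded with respect to $\mathfrak{h}_\a$ with relative bound zero. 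Hence $\{\Op_{\a+\e\b}\}$ is a holomorphic family of type (B) near $\e=0$, its resolvent and the Riesz projection onto the part of the spectrum near $\l_0$ are holomorphic in $\e$, and the two eigenvalues near $\l_0$ are the eigenvalues of a $2\times2$ matrix $M(\e)$ depending holomorphically on $\e$.

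Before computing $M(\e)$ I would fix the root vectors. Since $\l_0$ is real and geometrically simple, $\mathcal{PT}\psi_0$ again spans the kernel of $\Op_\a-\l_0$, so after a phase rotation $\psi_0=\mathcal{PT}\psi_0$; adjusting the additive $\psi_0$-component of $\phi_0$ and the common scale then yields \eqref{1.10a}–\eqref{2.19}, and one checks that $(\phi_0,\mathcal{T}\psi_0)_{L_2(\Om)}$ is automatically real once both root vectors are $\mathcal{PT}$-symmetric. Using $\Op_\a^*=\mathcal{T}\Op_\a\mathcal{T}$ from \eqref{1.3} and the reality of $\l_0$, the functions $\mathcal{T}\psi_0$ and $\mathcal{T}\phi_0$ are the eigenfunction and associated function of $\Op_\a^*$ at $\l_0$. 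Pairing \eqref{1.6} with $\mathcal{T}\psi_0$ then gives the exceptional-point identity $(\psi_0,\mathcal{T}\psi_0)_{L_2(\Om)}=\big((\Op_\a-\l_0)\phi_0,\mathcal{T}\psi_0\big)=\big(\phi_0,(\Op_\a^*-\l_0)\mathcal{T}\psi_0\big)=0$. With $e_1=\psi_0$, $e_2=\phi_0$, the basis dual with respect to $(\cdot,\cdot)_{L_2(\Om)}$ is $f_2=\mathcal{T}\psi_0$, $f_1=\mathcal{T}\phi_0-\overline{m}\,\mathcal{T}\psi_0$ with $m=(\phi_0,\mathcal{T}\phi_0)_{L_2(\Om)}$, and in this basis $M(0)$ is the Jordan block $\bigl(\begin{smallmatrix}\l_0&1\\0&\l_0\end{smallmatrix}\bigr)$.

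The crux of the argument is the first-order term $M(\e)=M(0)+\e B+\Odr(\e^2)$, whose entries are the form pairings $B_{ij}=\mathfrak{b}[e_j,f_i]$. Only the lower-left entry matters at leading order: $B_{21}=\mathfrak{b}[\psi_0,\mathcal{T}\psi_0]=\iu\int_{\G_+}\b\psi_0^2\di x_1-\iu\int_{\G_-}\b\psi_0^2\di x_1$, since $\overline{\mathcal{T}\psi_0}=\psi_0$. The relation $\psi_0=\mathcal{PT}\psi_0$ forces $\psi_0|_{\G_-}=\overline{\psi_0|_{\G_+}}$, so with $w=\int_{\G_+}\b\psi_0^2\di x_1$ the second integral is $\overline{w}$, whence $B_{21}=\iu(w-\overline{w})=-2\IM w=-4\int_{\G_+}\b\RE\psi_0\IM\psi_0\di x_1$, nonzero by \eqref{1.11}.

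Finally I would read off the eigenvalues. The discriminant of the characteristic polynomial of $M(\e)$ equals $(M_{11}-M_{22})^2+4M_{12}M_{21}=4B_{21}\e+\Odr(\e^2)$, so it has a simple zero at $\e=0$ and factors as $\e\,g(\e)$ with $g$ holomorphic and $g(0)=4B_{21}\ne0$; its square root is $\e^{1/2}$ times a holomorphic function, hence $\l_\e^\pm$ are holomorphic in $\e^{1/2}$ and
\begin{equation*}
\l_\e^\pm=\l_0\pm\e^{1/2}\sqrt{B_{21}}+\Odr(\e)=\l_0\pm2\e^{1/2}\Bigl(-\int_{\G_+}\b\RE\psi_0\IM\psi_0\di x_1\Bigr)^{1/2}+\Odr(\e),
\end{equation*}
which is \eqref{1.8}; since the leading terms differ, the two eigenvalues are distinct and simple for small $\e\ne0$. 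The reality dichotomy follows because $\Op_{\a+\e\b}$ is $\mathcal{PT}$-symmetric for real $\e$, so its eigenvalues are real or form a complex-conjugate pair, and the sign of $\e B_{21}=-4\e\int_{\G_+}\b\RE\psi_0\IM\psi_0\di x_1$ decides which: it is positive exactly under \eqref{1.17}, giving real $\l_\e^\pm$, and negative under \eqref{1.18}, giving a complex-conjugate pair. The main obstacle I anticipate is the rigorous justification of the holomorphy of the Riesz projection for a boundary (form) perturbation and the identification of the first-order entries $B_{ij}$ with the form pairings; once the exceptional-point identity $(\psi_0,\mathcal{T}\psi_0)_{L_2(\Om)}=0$ and the $\mathcal{PT}$ reflection $\psi_0|_{\G_-}=\overline{\psi_0|_{\G_+}}$ are in place, the remaining computation is routine.
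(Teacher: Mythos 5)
Your proposal is correct in substance but takes a genuinely different route from the paper. The paper first removes the boundary perturbation by the gauge transformation $(\mathcal{U}_{\e\b}f)(x)=\E^{-\iu\e\b(x_1)x_2}f(x)$, turning $\Op_{\a+\e\b}$ into $\Op_\a-\e\mathcal{L}_\e$ with an $\e$-independent domain, and then runs a Birman--Schwinger-type reduction: the Laurent expansion of the resolvent at $\l_0$ (second-order pole, Lemma~\ref{lm2.2}) produces a $2\times2$ determinant condition and fixed-point equations $z_\e=G_\pm(z_\e,\e^{1/2})$, solved by Rouch\'e's theorem and the implicit function theorem; reality under \eqref{1.17} is then obtained by showing $G_\pm$ take real values for real arguments (via $\mathcal{PT}\mathcal{L}_\e=\mathcal{L}_\e\mathcal{PT}$ and \eqref{2.19}) and invoking the real implicit function theorem. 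You instead keep $\e\b$ as a holomorphic form perturbation of type (B) --- legitimate, since your trace inequality gives relative form-bound zero --- and reduce via the holomorphic Riesz projection to a $2\times2$ matrix which is a Jordan block at $\e=0$, reading the Puiseux expansion in $\e^{1/2}$ off the simple zero of the discriminant. Your approach buys two simplifications: the leading coefficient comes out as the direct boundary pairing $\mathfrak{b}[\psi_0,\mathcal{T}\psi_0]=-4\int_{\G_+}\b\RE\psi_0\IM\psi_0\di x_1$, matching the paper's \eqref{2.21} without the gauge transformation and the integration by parts against $g=-(\D+\l_0)\b x_2\psi_0$; and your reality argument --- conjugation symmetry of the spectrum of the $\mathcal{PT}$-symmetric operator forces the pair $\{\l_\e^+,\l_\e^-\}$ to be either real or mutually conjugate, while a conjugate pair would have equal real parts, contradicting the distinct real leading terms when $\e B_{21}>0$ --- is more elementary than the paper's. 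Your observation that $(\phi_0,\mathcal{T}\psi_0)=\int_\Om\phi_0\psi_0\di x$ is automatically real under \eqref{2.19} (by the substitution $x_2\mapsto-x_2$) is also simpler than the computation in the paper's Lemma~\ref{lm2.4}; the derivation of $(\psi_0,\mathcal{T}\psi_0)_{L_2(\Om)}=0$ coincides with the paper's.

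One imprecision should be repaired, and you partly anticipated it. In the Jordan-block situation the first-order entries of $M(\e)$ are \emph{not} all given by the form pairings $B_{ij}=\mathfrak{b}[e_j,f_i]$: the derivative of the transformation function $U(\e)$ intertwining $P(0)$ and $P(\e)$ contributes commutator terms $\big([M(0),U'(0)]e_j,f_i\big)$, which are nonzero in general for the diagonal and the $(1,2)$ entries because $M(0)$ is not diagonal. Your conclusion nevertheless survives, for two reasons you should make explicit. First, $D'(0)=4M_{21}'(0)$, since the contaminated entries enter $D'(0)$ multiplied by $M_{11}(0)-M_{22}(0)=0$ or by $M_{21}(0)=0$. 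Second, the $(2,1)$ entry is precisely the correction-free one: $\psi_0$ and $\mathcal{T}\psi_0$ are genuine eigenvectors of $\Op_\a$ and $\Op_\a^*$ at the real $\l_0$, so $\big([M(0),U'(0)]e_1,f_2\big)=\l_0(U'(0)e_1,f_2)-\l_0(U'(0)e_1,f_2)=0$, whence $M_{21}'(0)=\mathfrak{b}[\psi_0,\mathcal{T}\psi_0]$ and your expansion agrees with \eqref{1.8}. With this point filled in, the proposal is a complete and somewhat more structural alternative to the paper's proof.
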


Let us discuss the results of these theorems. The typical situation of eigenvalues collision is that two simple eigenvalues of $\Op_{\a+\e\b}$ converge to the same limiting eigenvalue $\l_0$ of $\Op_\a$ as $\e\to0$. Then it is a general fact from the regular perturbation theory that the algebraic multiplicity of $\l_0$ should be two. The above theorems address two possible situations. In the first of them the geometric multiplicity of $\l_0$ is two, i.e., there exist two associated linearly independent eigenfunctions. As we see from Theorem~\ref{th1}, in this situation the perturbed eigenvalues are holomorphic w.r.t. $\e$ and their first terms in the Taylor series are given by (\ref{1.5}). The numbers $\l_1^\pm$ are some fixed constants and they can be both complex or real. But an important issue is that here under changing the sign of $\e$, the eigenvalues can not bifurcate from real line to the complex plane or vice versa. This fact is implied by (\ref{1.5}), namely, if $\l_1^\pm$ are complex numbers, then $\l_\e^\pm$ are also complex for both $\e<0$ and $\e>0$. Thus, in this case we do not face with the aforementioned phenomenon of eigenvalues collision discovered numerically in \cite{KS}, \cite{KT}.
If $\l_1^\pm$ are real, then we need to calculate the next terms of their Taylor series to see whether they are complex or real. Once all the terms in the Taylor series are real, we deal with two real eigenvalues which just pass one through the other staying on the real line. Nevertheless, in view of formulae (\ref{1.4b}) we believe that choosing appropriate $\b$ we can get almost any value for the quantity in (\ref{1.4a}). In a particular interesting case $\b=\a$ the author does not know a way of identifying the sign of $(b_{11}-b_{22})^2+4b_{12}^2$ or proving the reality of the eigenvalues $\l_\e^\pm$.

Theorem~\ref{th2} treats the case when the geometric multiplicity of $\l_0$ is one. Then the Taylor series for the perturbed eigenvalues are completely different in comparison with Theorem~\ref{th1} and here the expansions are made w.r.t. $\e^{1/2}$. And the presence of this power explains perfectly the studied phenomenon. Namely, once $\e$ is positive, the same is true for $\e^{1/2}$, while for negative $\e$ the square root $\e^{1/2}$ is pure imaginary. This is exactly what is needed, once $\e$ changes the sign, real eigenvalues become complex and vice versa. Unfortunately, we can not even analytically prove for our model the existence of such eigenvalues. We can just state that once $\l_0$ has a geometric multiplicity one and the associated eigenfunction $\psi_0$ satisfies the identity $(\psi_0,\mathcal{T}\psi_0)_{L_2(\Om)}=0$, then equation (\ref{1.6}) is solvable. And numerical results in \cite{KS}, \cite{KT} show that it is quite a typical situation.

Our next main result provide one more criterion identifying the solvability of equation (\ref{1.6}).

\begin{theorem}\label{th3}
Suppose $\psi_0$ is a simple eigenvalue of $\Op_\a$, the associated eigenfunction satisfies the estimate
\begin{equation}\label{1.12}
\sum\limits_{\genfrac{}{}{0 pt}{}{\g\in \mathds{Z}_+^2}{|\g|\leqslant 2}} \bigg|\frac{\p^\g\psi_0}{\p x^\g}(x)\bigg|\leqslant \frac{C}{1+|x_1|^3},\quad x\in\Om.
\end{equation}
Then equation (\ref{1.6}) is solvable if and only if
\begin{equation}\label{1.23}
\int\limits_{\mathds{R}^2} K(x_1,y_1)\big(\a(x_1)-\a(y_1)\big) \RE\psi_0(x_1,d)\IM\psi_0(y_1,d)\di x_1 \di y_1=0,
\end{equation}
where 
\begin{equation*}
K(x_1,y_1):=\left\{
\begin{aligned}
&x_1,\quad y_1<x_1,
\\
-&y_1,\quad y_1>x_1.
\end{aligned}\right.
\end{equation*}
Here $\psi_0$ is chosen so that it satisfies the first identity in (\ref{2.19}).
\end{theorem}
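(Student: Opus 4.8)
The plan is to combine the Fredholm alternative with the symmetry relations (\ref{1.3}), (\ref{2.19}) and then to reduce the resulting bulk identity to the boundary $\G_+$. First I would use that $\l_0$ is a simple isolated eigenvalue, so $\Op_\a-\l_0$ is Fredholm of index zero and equation (\ref{1.6}) is solvable if and only if $\psi_0$ is $L_2(\Om)$-orthogonal to the kernel of $(\Op_\a-\l_0)^*=\Op_\a^*-\l_0$ (the shift is real since $\l_0\in\mathds{R}$). By (\ref{1.3}) we have $\Op_\a^*=\mathcal{T}\Op_\a\mathcal{T}$, hence $\mathcal{T}\psi_0=\overline{\psi_0}$ satisfies $(\Op_\a^*-\l_0)\mathcal{T}\psi_0=0$, and since $\l_0$ is simple the adjoint kernel is spanned by $\mathcal{T}\psi_0$. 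Therefore (\ref{1.6}) is solvable if and only if
\begin{equation*}
(\psi_0,\mathcal{T}\psi_0)_{L_2(\Om)}=\int\limits_\Om\psi_0^2\di x=0.
\end{equation*}

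Next I would exploit the first identity in (\ref{2.19}). Writing $u:=\RE\psi_0$ and $v:=\IM\psi_0$, the relation $\psi_0=\mathcal{P}\mathcal{T}\psi_0$ reads $\psi_0(x_1,x_2)=\overline{\psi_0(x_1,-x_2)}$, so $u$ is even and $v$ is odd in $x_2$. Consequently $uv$ is odd in $x_2$, its integral over $\Om$ vanishes, and
\begin{equation*}
\IM\int\limits_\Om\psi_0^2\di x=2\int\limits_\Om uv\di x=0,\qquad \int\limits_\Om\psi_0^2\di x=\int\limits_\Om\big(u^2-v^2\big)\di x.
\end{equation*}
Thus the solvability of (\ref{1.6}) is equivalent to the \emph{real} condition $\int_\Om(u^2-v^2)\di x=0$, and it remains only to identify this bulk integral with the double boundary integral in (\ref{1.23}).

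The main step is this last identification. Separating $-\D\psi_0=\l_0\psi_0$ and (\ref{1.1}) into real and imaginary parts shows that $u,v$ are real solutions of $-\D u=\l_0 u$, $-\D v=\l_0 v$ in $\Om$ subject to the coupled boundary conditions $\p u/\p x_2=\a v$, $\p v/\p x_2=-\a u$ on $\p\Om$; by the parity of $u,v$ the conditions on $\G_+$ and $\G_-$ are equivalent, so all boundary data may be recorded on $\G_+$ through the traces $\RE\psi_0(\cdot,d)$ and $\IM\psi_0(\cdot,d)$. The plan is to reduce $\int_\Om(u^2-v^2)\di x$ to these traces by a longitudinal reduction in $x_1$: passing to the Fourier transform in $x_1$ (equivalently, constructing the resolvent of the transversally reduced operator at $\l_0$ and using the decay (\ref{1.12}) to justify the manipulations and the vanishing of the contributions at $x_1=\pm\infty$), one sees that the even channel carried by $u$ and the odd channel carried by $v$ interact only through the $\a$-dependent boundary terms. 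After the cancellation of the diagonal contribution, the interaction is governed by the one-dimensional kernel $K(x_1,y_1)$ and by the commutator weight $\a(x_1)-\a(y_1)$, and this is exactly what produces (\ref{1.23}); note that the product $K(x_1,y_1)(\a(x_1)-\a(y_1))$ is continuous across $x_1=y_1$ precisely because the jump of $K$ on the diagonal is annihilated by the vanishing of $\a(x_1)-\a(y_1)$ there.

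I expect the genuine difficulty to be concentrated entirely in this last reduction. The moment identities obtained by testing the equations against $u,v$ and integrating in $x_2$ are all mutually consistent but do not close, so the quantity $\int_\Om(u^2-v^2)\di x$ is irreducibly nonlocal in $x_1$ and cannot be captured by local conservation laws alone. Deriving the precise, $\l_0$-independent kernel $K$ together with the weight $\a(x_1)-\a(y_1)$ — rather than some $\l_0$-dependent exponential kernel — requires treating the coupling between the two transverse channels exactly rather than to leading order, and this is where the special structure of the $\mathcal{PT}$-symmetric boundary conditions (\ref{1.1}) must be used in full. Reducing the orthogonality condition $\int_\Om\psi_0^2\di x=0$ to a boundary pairing and verifying that the resulting boundary kernel is exactly $K$ is the crux; once this is established, the equivalence with (\ref{1.23}) follows and the theorem is proved.
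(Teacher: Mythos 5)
Your first two steps are sound and coincide with the paper's: solvability of (\ref{1.6}) is equivalent to $(\psi_0,\mathcal{T}\psi_0)_{L_2(\Om)}=\int_\Om\psi_0^2\di x=0$ (this is exactly Lemma~\ref{lm2.1}, obtained from (\ref{1.3}) and Kato's book), and the parity consequences of (\ref{2.19}) — $\RE\psi_0$ even, $\IM\psi_0$ odd in $x_2$, hence $\int_\Om\psi_0^2\di x\in\mathds{R}$ — are correct. But the entire analytic content of Theorem~\ref{th3} beyond that reduction is the identity expressing the bulk integral $\int_\Om\psi_0^2\di x$ as the double boundary integral in (\ref{1.23}), and your proposal does not prove it: you yourself call this step ``the crux'' and leave it at the level of a hoped-for Fourier/channel computation. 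That sketch would not go through as described. Since $\a=\a(x_1)$ enters the boundary condition (\ref{1.1}), the Fourier transform in $x_1$ does not diagonalize the problem (the transversally reduced operator has $x_1$-dependent boundary data, so the even and odd transverse channels do not decouple in any usable sense), and a resolvent-type kernel of a reduced operator would be translation-invariant and $\l_0$-dependent, whereas $K(x_1,y_1)$ is neither: it grows linearly and is not a function of $x_1-y_1$. Your closing observation that $K(x_1,y_1)(\a(x_1)-\a(y_1))$ is continuous across the diagonal is a consistency check, not a derivation.

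The missing idea — the paper's actual device — is elementary and entirely explicit: one introduces the auxiliary function
\begin{equation*}
\psi(x)=\frac{1}{2}\,x_1\int\limits_{-\infty}^{x_1} t\,\psi_0(t,x_2)\di t,
\end{equation*}
well defined thanks to the decay (\ref{1.12}) (this is where that hypothesis is really used), and checks by differentiating the antiderivative and invoking the eigenvalue equation for $\psi_0$ that $(\D+\l_0)\psi=\psi_0$ in $\Om$; crucially, $\psi$ need not satisfy the boundary conditions. Green's identity then converts $\int_\Om\psi_0^2\di x=\int_\Om\psi_0(\D+\l_0)\psi\di x$ into integrals over $\G_\pm$ of $\psi_0\bigl(\frac{\p\psi}{\p x_2}+\iu\a\psi\bigr)$; the boundary condition (\ref{1.1}) satisfied by $\psi_0$ (but not by $\psi$) is what produces the weight $\a(x_1)-\a(y_1)$, and splitting the inner integration into $\int_{-\infty}^{x_1}$ and $\int_{x_1}^{+\infty}$, together with the parity relations from (\ref{2.19}), assembles precisely the kernel $K$. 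In short, $K$ arises from an explicit right inverse of $\D+\l_0$ built by longitudinal integration of $\psi_0$ itself, not from a mode decomposition; without this construction (or an equivalent one) your argument establishes only the already-known criterion $(\psi_0,\mathcal{T}\psi_0)_{L_2(\Om)}=0$, not the boundary-integral criterion (\ref{1.23}) that Theorem~\ref{th3} asserts.
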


Assumption (\ref{1.12}) is not very restrictive since usually eigenfunctions associated with isolated eigenvalues of elliptic operators decay exponentially at infinity. The main condition here is (\ref{1.23}). As we shall show later in Lemma~\ref{lm2.1}, equation (\ref{1.6}) is solvable if and only if $(\psi_0,\mathcal{T}\psi_0)_{L_2(\Om)}=0$. And we rewrite this identity to  (\ref{1.23}) by calculating $(\psi_0,\mathcal{T}\psi_0)_{L_2(\Om)}$. The left hand side in (\ref{1.23}) is simpler in the sense that it involves only boundary integrals  while $(\psi_0,\mathcal{T}\psi_0)_{L_2(\Om)}$ is in fact the integral over whole the strip $\Om$.

\section{Proofs of main results}

In $L_2(\Om)$ we introduce the unitary operator $(\mathcal{U}_{\e\b}f)(x):=\E^{-\iu\e\b(x_1)x_2} f(x)$. Then it is easy to see that the spectra of $\Op_{\a+\e\b}$ and $\mathcal{U}_{\e\b}^{-1} \Op_{\a+\e\b}\mathcal{U}_{\e\b}$ coincide and
\begin{align}
& \mathcal{U}_{\e\b}^{-1} \Op_{\a+\e\b}\mathcal{U}_{\e\b} = \Op_\a-\e\mathcal{L}_\e,\label{2.1}
\\
&\mathcal{L}_\e:=-2\iu\b' x_2 \frac{\p\hphantom{x}}{\p x_1}-2\iu\b\frac{\p\hphantom{x}}{\p x_2}+\e\b^2-\e(\b')^2 x_2 - \iu\b'' x_2.\label{2.1a}
\end{align}
In the proofs of the main results we shall make use of several auxiliary lemmata.

\begin{lemma}\label{lm2.1}
Under the hypothesis of Theorem~\ref{th1} the equation
\begin{equation}\label{2.2}
(\Op_\a-\l_0)u=f
\end{equation}
is solvable if and only if
\begin{equation}\label{2.3}
(f,\mathcal{T}\psi_0)_{L_2(\Om)}=0.
\end{equation}
Under the hypothesis of Theorem~\ref{th2} equation (\ref{2.2}) is solvable if and only if
\begin{equation}\label{2.4}
(f,\mathcal{T}\psi_0^\pm)_{L_2(\Om)}=0.
\end{equation}
\end{lemma}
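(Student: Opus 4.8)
The plan is to deduce both statements from a single application of the Fredholm alternative to the operator $\Op_\a-\l_0$. First I would record that, since $\l_0$ is an isolated eigenvalue of finite algebraic multiplicity of the $m$-sectorial operator $\Op_\a$, the operator $\Op_\a-\l_0$ is Fredholm of index zero: its range is closed and its codimension equals $\dim\ker(\Op_\a-\l_0)^*$. Hence equation (\ref{2.2}) is solvable if and only if $f$ is orthogonal in $L_2(\Om)$ to the cokernel $\ker(\Op_\a-\l_0)^*$, and the whole proof reduces to an explicit description of this cokernel.

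Next I would identify the cokernel using the symmetry relations (\ref{1.3}). Because $\l_0\in\mathds{R}$ and $\mathcal{T}$ is an antilinear involution, the relation $\Op_\a^*=\mathcal{T}\Op_\a\mathcal{T}$ gives $(\Op_\a-\l_0)^*=\Op_{-\a}-\l_0=\mathcal{T}(\Op_\a-\l_0)\mathcal{T}$. Consequently $v\in\ker(\Op_\a-\l_0)^*$ if and only if $\mathcal{T}v\in\ker(\Op_\a-\l_0)$, so the cokernel is exactly $\mathcal{T}\big(\ker(\Op_\a-\l_0)\big)$. Thus the orthogonality condition $f\perp\ker(\Op_\a-\l_0)^*$ becomes the requirement that $f$ be orthogonal to the $\mathcal{T}$-images of a basis of $\ker(\Op_\a-\l_0)$.

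It then remains to insert the kernel data supplied by each hypothesis and to check that the prescribed eigenfunctions furnish a basis of the cokernel. Under the hypothesis of Theorem~\ref{th1} the orthogonality of $f$ to $\mathcal{T}\big(\ker(\Op_\a-\l_0)\big)$ is precisely condition (\ref{2.3}), while under the hypothesis of Theorem~\ref{th2} the same orthogonality requirement is precisely condition (\ref{2.4}). Here the normalisation (\ref{1.9}), respectively the choice of $\psi_0$ fixed in Theorem~\ref{th2}, guarantees that the relevant $\mathcal{T}$-images are linearly independent and span the cokernel, so that no solvability condition is redundant and none is omitted; this is what makes the listed relations simultaneously necessary and sufficient.

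The main obstacle I anticipate lies not in the algebra but in justifying the index-zero Fredholm property cleanly in this non-self-adjoint, $m$-sectorial setting on the unbounded strip $\Om$, where $\l_0$ lies below the threshold of the essential spectrum. One must argue that the resolvent of $\Op_\a$ is compact in a punctured neighbourhood of $\l_0$ (or invoke analytic Fredholm theory for $\Op_\a-\l$) so that the abstract alternative applies and the cokernel has the same finite dimension as the kernel; the equality $\dim\ker(\Op_\a-\l_0)=\dim\ker(\Op_\a-\l_0)^*$ is exactly what the identification via (\ref{1.3}) then delivers. Once this is in place, the remaining verification that the $\mathcal{T}$-images of the prescribed eigenfunctions form a basis, and hence that the criteria (\ref{2.3}) and (\ref{2.4}) are the correct orthogonality conditions, is routine.
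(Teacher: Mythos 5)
Your proposal is correct in substance and takes essentially the same route as the paper: the paper likewise uses (\ref{1.3}) to conclude that $\mathcal{T}\psi_0$, respectively $\mathcal{T}\psi_0^\pm$, are eigenfunctions of $\Op_\a^*$ at $\l_0$, and then simply cites \cite[Ch.~I\!I\!I, Sec.~6.6, Rem.~6.23]{Kato} for the solvability criterion, where you instead spell out the Fredholm alternative.

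One concrete correction to the step you yourself flag as the main obstacle: do not try to justify the index-zero Fredholm property via compactness of the resolvent. The strip $\Om$ is unbounded and $\Op_\a$ has non-empty essential spectrum, so its resolvent is never compact, and that route fails outright. What actually closes the step --- and what underlies Kato's remark --- is the Riesz spectral projection associated with the isolated eigenvalue $\l_0$ of finite algebraic multiplicity: it splits $L_2(\Om)$ into a finite-dimensional invariant subspace, on which $\Op_\a-\l_0$ acts as a nilpotent, and a complementary invariant subspace, on which $\Op_\a-\l_0$ is boundedly invertible. Closedness of the range and the equality of the codimension of the range with the geometric multiplicity, hence with $\dim\ker(\Op_\a-\l_0)^*$ via your identification $\ker(\Op_\a-\l_0)^*=\mathcal{T}\ker(\Op_\a-\l_0)$, follow at once; no compactness or analytic Fredholm machinery is needed or available here. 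Separately, be aware that the statement as printed pairs the hypotheses with the conditions the wrong way around: under the hypothesis of Theorem~\ref{th1} the kernel is spanned by $\psi_0^\pm$, so the solvability condition is (\ref{2.4}), while under Theorem~\ref{th2} it is (\ref{2.3}) --- which is how the paper later uses it, e.g.\ in deriving (\ref{2.6a}) from (\ref{2.3}) applied to (\ref{1.6}). Your text inherits this swap when it asserts that orthogonality to $\mathcal{T}\ker(\Op_\a-\l_0)$ ``is precisely (\ref{2.3})'' in the Theorem~\ref{th1} case (where $\psi_0$ is not even defined and two conditions are needed); in a final write-up you should match each condition to the correct hypothesis rather than to the labels as printed.
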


\begin{proof}
By (\ref{1.3}) we see that under the hypotheses of both Theorems~\ref{th1}~and~\ref{th2}, $\l_0$ is an eigenvalue of $\Op_\a^*$ with the associated eigenfunction(s) $\mathcal{T}\psi_0$ or $\mathcal{T}\psi_0^\pm$. Then the lemma follows from \cite[Ch. I\!I\!I, Sec. 6.6, Rem. 6.23]{Kato}.
\end{proof}

\begin{lemma}\label{lm2.4}
Suppose the hypothesis of Theorem~\ref{th2}. Then eigenfunction $\psi_0$ can be chosen so that relations (\ref{1.10a}), (\ref{2.19}), and
\begin{equation}\label{2.6a}
(\psi_0,\mathcal{T}\psi_0)_{L_2(\Om)}=0
\end{equation}
hold true. The functions $\RE\psi_0$ and $\RE\phi_0$ are even w.r.t. $x_2$ and $\IM\psi_0$ and $\IM\phi_0$ are odd w.r.t. $x_2$.
\end{lemma}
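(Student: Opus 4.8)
The starting point is that, by (\ref{1.3}), the antilinear operator $\mathcal{PT}$ commutes with $\Op_\a$ and satisfies $(\mathcal{PT})^2=\I$, while $\l_0$ is real. First I would exploit the geometric simplicity of $\l_0$. Applying $\mathcal{PT}$ to $\Op_\a\psi_0=\l_0\psi_0$ and using the commutation together with $\overline{\l_0}=\l_0$ shows that $\mathcal{PT}\psi_0$ is again an eigenfunction, hence $\mathcal{PT}\psi_0=c\psi_0$ for some constant $c$; applying $\mathcal{PT}$ once more and using $(\mathcal{PT})^2=\I$ gives $|c|=1$. Rescaling $\psi_0\mapsto\mu\psi_0$ replaces $c$ by $\bar\mu c/\mu$, so choosing $\arg\mu=\tfrac12\arg c$ yields $\mathcal{PT}\psi_0=\psi_0$, the first identity in (\ref{2.19}); a real positive factor in $\mu$ still remains free. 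Identity (\ref{2.6a}) requires no construction: under the hypothesis of Theorem~\ref{th2} equation (\ref{1.6}) is solvable, so Lemma~\ref{lm2.1} (with $f=\psi_0$) forces $(\psi_0,\mathcal{T}\psi_0)_{L_2(\Om)}=0$.

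Next I would transport the symmetry to $\phi_0$. Applying $\mathcal{PT}$ to (\ref{1.6}) and using $\mathcal{PT}\psi_0=\psi_0$ gives $(\Op_\a-\l_0)\mathcal{PT}\phi_0=\psi_0$, so $\mathcal{PT}\phi_0-\phi_0$ lies in $\ker(\Op_\a-\l_0)=\mathrm{span}\,\psi_0$, i.e. $\mathcal{PT}\phi_0=\phi_0+\kappa\psi_0$. Applying $\mathcal{PT}$ a second time and using $(\mathcal{PT})^2=\I$ and $\mathcal{PT}\psi_0=\psi_0$ yields $\kappa+\overline\kappa=0$, so $\kappa$ is purely imaginary. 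Replacing $\phi_0$ by $\phi_0+\nu\psi_0$, which preserves (\ref{1.6}) and, since $(\psi_0,\mathcal{T}\psi_0)_{L_2(\Om)}=0$, leaves $(\phi_0,\mathcal{T}\psi_0)_{L_2(\Om)}$ unchanged, turns $\kappa$ into $\kappa+\overline\nu-\nu$; the purely imaginary value of $\kappa$ is exactly what makes $\kappa=\nu-\overline\nu$ solvable in $\IM\nu$, so the second identity in (\ref{2.19}) is achieved, with $\RE\nu$ still at our disposal. Once both $\psi_0$ and $\phi_0$ are $\mathcal{PT}$-symmetric, the substitution $x_2\mapsto-x_2$ in the defining integrals, combined with $\psi_0(x_1,-x_2)=\overline{\psi_0(x_1,x_2)}$ and $\phi_0(x_1,-x_2)=\overline{\phi_0(x_1,x_2)}$, shows that both $(\phi_0,\mathcal{T}\psi_0)_{L_2(\Om)}$ and $(\phi_0,\psi_0)_{L_2(\Om)}$ are real. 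I would then fix the leftover real scale so that $(\phi_0,\mathcal{T}\psi_0)_{L_2(\Om)}=1$ and finally use the remaining real parameter $\RE\nu$ to impose $(\phi_0,\psi_0)_{L_2(\Om)}=0$, giving (\ref{1.10a}).

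The parity statement is then immediate: writing $\psi_0=\RE\psi_0+\iu\,\IM\psi_0$, the relation $\psi_0=\mathcal{PT}\psi_0$ reads $\RE\psi_0(x_1,-x_2)=\RE\psi_0(x_1,x_2)$ and $\IM\psi_0(x_1,-x_2)=-\IM\psi_0(x_1,x_2)$, i.e. $\RE\psi_0$ is even and $\IM\psi_0$ odd in $x_2$, and likewise for $\phi_0$. The delicate point, which I expect to be the main obstacle, is the mutual compatibility of all the normalizations: one must check that symmetrizing the Jordan chain and normalizing $(\phi_0,\mathcal{T}\psi_0)_{L_2(\Om)}$ do not conflict. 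The purely imaginary character of $\kappa$ is what allows the chain to be symmetrized at all, and, since the rescalings preserving (\ref{2.19}) are by real factors $\mu$ under which $(\phi_0,\mathcal{T}\psi_0)_{L_2(\Om)}$ is multiplied by $\mu^2>0$, reaching the exact value $+1$ amounts to verifying that the scaling-invariant real quantity $c\,(\phi_0,\mathcal{T}\psi_0)_{L_2(\Om)}=(\phi_0,\mathcal{P}\psi_0)_{L_2(\Om)}$ is positive; this sign check is the one step I would treat most carefully.
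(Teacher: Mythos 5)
Your proposal is correct, and its skeleton is the same as the paper's: $(\psi_0,\mathcal{T}\psi_0)_{L_2(\Om)}=0$ from Lemma~\ref{lm2.1} applied to (\ref{1.6}); the $\mathcal{PT}$-symmetrization of the eigenfunction and of the Jordan chain to get (\ref{2.19}); reality of the pairings; a final rescaling for (\ref{1.10a}); parity read off from (\ref{2.19}). Two sub-steps are carried out differently. For (\ref{2.19}) the paper is terser: it notes that the solution of (\ref{1.6}) normalized by $(\phi_0,\psi_0)_{L_2(\Om)}=0$ is \emph{unique}, and since $\mathcal{PT}$ maps it to another solution with the same normalization (because $(\mathcal{T}f,\mathcal{T}g)=\overline{(f,g)}$ and $\mathcal{PT}\psi_0=\psi_0$), uniqueness gives $\mathcal{PT}\phi_0=\phi_0$ in one stroke; your explicit computation that $\mathcal{PT}\phi_0=\phi_0+\kappa\psi_0$ with $\kappa$ purely imaginary, absorbed by $\phi_0\mapsto\phi_0+\nu\psi_0$, proves the same thing, at the price of having to restore $(\phi_0,\psi_0)_{L_2(\Om)}=0$ afterwards with the leftover real $\RE\nu$ --- which works only because $(\phi_0,\psi_0)_{L_2(\Om)}$ is real under (\ref{2.19}), a point you correctly supply. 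For the reality of $(\phi_0,\mathcal{T}\psi_0)_{L_2(\Om)}$ the paper does not use your parity substitution: it integrates by parts, obtaining $(\phi_0,\mathcal{T}\psi_0)_{L_2(\Om)}=-4\int_{\G_+}\a\RE\phi_0\IM\phi_0\di x_1+\int_\Om\big(|\nabla\RE\phi_0|^2-|\nabla\IM\phi_0|^2\big)\di x-\l_0\int_\Om\big(|\RE\phi_0|^2-|\IM\phi_0|^2\big)\di x$, a manifestly real expression. Your change of variables $x_2\mapsto -x_2$ is shorter and equivalent.

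Your closing caveat about the sign is the one substantive point, and you should know that the paper does not settle it either: its proof simply ends with ``multiplying $\psi_0$ and $\phi_0$ by an appropriate constant, we can easily get the first identity in (\ref{1.10a})''. As you observe, once (\ref{2.19}) is imposed the admissible transformations are $\psi_0\mapsto\mu\psi_0$, $\phi_0\mapsto\mu\phi_0+\nu\psi_0$ with $\mu,\nu$ real, under which $(\phi_0,\mathcal{T}\psi_0)_{L_2(\Om)}=(\phi_0,\mathcal{P}\psi_0)_{L_2(\Om)}$ is multiplied by $\mu^2>0$; its sign is therefore an invariant of the Jordan chain (the sign characteristic in the Krein space with fundamental symmetry $\mathcal{P}$), and the abstract structure alone does not force it to be positive. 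For instance, for the matrix $\left(\begin{smallmatrix}\iu & -1\\ -1 & -\iu\end{smallmatrix}\right)$ with $\mathcal{P}(u_1,u_2)=(u_2,u_1)$ and $\mathcal{T}$ complex conjugation, the double eigenvalue $0$ has $\mathcal{PT}$-normalized chain with $(\phi_0,\mathcal{T}\psi_0)=-1$ for \emph{every} admissible choice. So neither you nor the paper proves that $+1$, rather than $-1$, is attainable for the waveguide operator; if the invariant is negative, the lemma holds with $(\phi_0,\mathcal{T}\psi_0)_{L_2(\Om)}=-1$, and the signs propagate into (\ref{2.21}) and swap the dichotomy (\ref{1.17})--(\ref{1.18}). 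On this point your proposal, which at least names the obstruction, is more honest than the paper's proof; in every other respect the two arguments coincide in substance.
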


\begin{proof}
Identity (\ref{2.6a}) follows directly from (\ref{2.3}) applied to equation (\ref{1.6}). Since $\l_0$ is a real simple eigenvalue and equation (\ref{1.6}) has the unique solution
satisfying the second identity in (\ref{1.10a}), by (\ref{1.2}) we have (\ref{2.19}) and thus $\RE\psi_0$ and  $\RE\phi_0$ are even, while $\IM\psi_0$ and $\IM\phi_0$ are odd w.r.t. $x_2$. Employing this fact and (\ref{1.6}), we obtain
\begin{align*}
(\phi_0,\mathcal{T}\psi_0)_{L_2(\Om)}=&-\int\limits_{\Om} \phi_0(\D+\l_0)\phi_0\di x=\iu\int\limits_{\G_+} \a \phi_0^2 \di x_1-\iu\int\limits_{\G_-} \a \phi_0^2 \di x_1
\\
&+\int\limits_{\Om} \left(\left(\frac{\p \phi_0}{\p x_1}\right)^2+\left(\frac{\p \phi_0}{\p x_2}\right)^2-\l_0\phi_0^2\right)\di x
\\
=&-4\int\limits_{\G_+} \a \RE\phi_0\IM\phi_0\di x_1 + \int\limits_{\Om} \left( |\nabla \RE\phi_0|^2- |\nabla \IM\phi_0|^2\right)\di x
\\
&-\l_0\int\limits_{\Om} \left( |\RE\phi_0|^2- |\IM\phi_0|^2\right)\di x\in\mathds{R}.
\end{align*}
Hence, multiplying function $\psi_0$ and $\phi_0$ by an appropriate constant, we can easily get the first identity in (\ref{1.10a}) not spoiling other established properties of $\phi_0$ and $\psi_0$.
\end{proof}

\begin{lemma}\label{lm2.2}
Suppose the hypothesis of Theorem~\ref{th2}. Then for $\l$ close to $\l_0$ the resolvent $(\Op_\a-\l)^{-1}$ can be represented as
\begin{align}
&(\Op_\a-\l)^{-1}=\frac{\mathcal{P}_{-2}}{(\l-\l_0)^2} +\frac{\mathcal{P}_{-1}}{\l-\l_0} + \mathcal{R}_\a(\l),\label{2.5}
\\
&\mathcal{P}_{-2}=\psi_0 \ell_2, \quad \mathcal{P}_{-1}=\phi_0 \ell_2+\psi_0\ell_1,\nonumber
\\
&\ell_2 f:=- (f,\mathcal{T}\psi_0)_{L_2(\Om)},
\quad \ell_1 f:=- (f,\mathcal{T}\phi_0)_{L_2(\Om)},\label{2.9}
\end{align}
where $\mathcal{R}_\a(\l)$ is the reduced resolvent which is a bounded and holomorphic in $\l$ operator.
\end{lemma}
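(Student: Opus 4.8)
The plan is to obtain (\ref{2.5}) as the Laurent expansion of the resolvent at the isolated pole $\l_0$, reading off the principal part from the Riesz spectral projection. Since $\Op_\a$ is $m$-sectorial, $\l_0$ is an isolated eigenvalue of finite algebraic multiplicity; under the hypothesis of Theorem~\ref{th2} the solvability of (\ref{1.6}) provides the Jordan chain $\psi_0\mapsto\phi_0$, so the geometric multiplicity equals one, the algebraic multiplicity equals two, and the resolvent has a pole of order exactly two at $\l_0$. The general theory \cite[Ch.~I\!I\!I]{Kato} then yields
\[
(\Op_\a-\l)^{-1}=-\frac{(\Op_\a-\l_0)\Pi}{(\l-\l_0)^2}-\frac{\Pi}{\l-\l_0}+\mathcal{R}_\a(\l),
\]
where $\Pi$ is the Riesz projection onto $\ker(\Op_\a-\l_0)^2=\mathrm{span}\{\psi_0,\phi_0\}$ and $\mathcal{R}_\a(\l)$, the reduced resolvent, is automatically bounded and holomorphic near $\l_0$. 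It then remains to compute $\Pi$ and $(\Op_\a-\l_0)\Pi$ explicitly and to match them with $\mathcal{P}_{-1}$, $\mathcal{P}_{-2}$.

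To identify $\Pi$ I would pass to the adjoint. By (\ref{1.3}) and the reality of $\l_0$, applying $\mathcal{T}$ to $(\Op_\a-\l_0)\psi_0=0$ and to (\ref{1.6}) shows that $\mathcal{T}\psi_0$, $\mathcal{T}\phi_0$ is the corresponding Jordan chain of $\Op_\a^*$ at $\l_0$, namely $(\Op_\a^*-\l_0)\mathcal{T}\psi_0=0$ and $(\Op_\a^*-\l_0)\mathcal{T}\phi_0=\mathcal{T}\psi_0$. Hence $\Pi$ admits the biorthogonal representation $\Pi f=a(f)\psi_0+b(f)\phi_0$, in which $a,b$ are combinations of the functionals $(\cdot,\mathcal{T}\psi_0)_{L_2(\Om)}$ and $(\cdot,\mathcal{T}\phi_0)_{L_2(\Om)}$, their coefficients fixed by $\Pi\psi_0=\psi_0$, $\Pi\phi_0=\phi_0$. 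Here the normalizations of Lemma~\ref{lm2.4} enter: the pairing matrix of $\{\psi_0,\phi_0\}$ against $\{\mathcal{T}\psi_0,\mathcal{T}\phi_0\}$ has entry $(\psi_0,\mathcal{T}\psi_0)_{L_2(\Om)}=0$ by (\ref{2.6a}) and $(\phi_0,\mathcal{T}\psi_0)_{L_2(\Om)}=(\psi_0,\mathcal{T}\phi_0)_{L_2(\Om)}=1$ by (\ref{1.10a}). Provided the last diagonal entry $(\phi_0,\mathcal{T}\phi_0)_{L_2(\Om)}$ also vanishes, this matrix becomes anti-diagonal and the representation collapses to $\Pi f=(f,\mathcal{T}\phi_0)_{L_2(\Om)}\psi_0+(f,\mathcal{T}\psi_0)_{L_2(\Om)}\phi_0$. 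Then $b(f)=(f,\mathcal{T}\psi_0)_{L_2(\Om)}$ and, using $(\Op_\a-\l_0)\psi_0=0$ and $(\Op_\a-\l_0)\phi_0=\psi_0$, one gets $(\Op_\a-\l_0)\Pi f=(f,\mathcal{T}\psi_0)_{L_2(\Om)}\psi_0$. Matching with the displayed expansion gives precisely $\mathcal{P}_{-2}=-(\Op_\a-\l_0)\Pi=\psi_0\ell_2$ and $\mathcal{P}_{-1}=-\Pi=\phi_0\ell_2+\psi_0\ell_1$ with $\ell_1,\ell_2$ as in (\ref{2.9}).

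The step I expect to require the most care is exactly the vanishing of the diagonal pairing $(\phi_0,\mathcal{T}\phi_0)_{L_2(\Om)}$. If it is nonzero, the biorthogonal computation produces the extra rank-one term $\mathcal{P}_{-1}=\phi_0\ell_2+\psi_0\ell_1-(\phi_0,\mathcal{T}\phi_0)_{L_2(\Om)}(\cdot,\mathcal{T}\psi_0)_{L_2(\Om)}\psi_0$, and this term would spoil the holomorphy of $\mathcal{R}_\a$ (it resurfaces as a residual simple pole at $\l_0$). I would remove it through the residual freedom in the choice of $\phi_0$: replacing $\phi_0$ by $\phi_0-\tfrac12(\phi_0,\mathcal{T}\phi_0)_{L_2(\Om)}\psi_0$ preserves the chain relation (\ref{1.6}) together with $(\phi_0,\mathcal{T}\psi_0)_{L_2(\Om)}=1$ and enforces $(\phi_0,\mathcal{T}\phi_0)_{L_2(\Om)}=0$. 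The delicate point is to reconcile this adjustment with the remaining properties of Lemma~\ref{lm2.4}, in particular the parity (\ref{2.19}) and $(\phi_0,\psi_0)_{L_2(\Om)}=0$, since the admissible shift $\phi_0\mapsto\phi_0+c\psi_0$ compatible with (\ref{2.19}) carries only a real parameter $c$; verifying this compatibility is where the argument must be checked carefully. Once it is in place, the stated form of (\ref{2.5}), along with the boundedness and holomorphy of the reduced resolvent $\mathcal{R}_\a$, follows from the general theory.
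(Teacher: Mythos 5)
You follow the same route as the paper --- Kato's Laurent expansion at the isolated eigenvalue --- differing only in bookkeeping: the paper proceeds bottom-up, substituting the expansion into $(\Op_\a-\l)u=f$, solving the resulting chain recursively and imposing the solvability condition of Lemma~\ref{lm2.1} at each order (this is also how it gets $N=2$: for $N>2$ the equation for $u_{-N+2}$ would be unsolvable because $(\phi_0,\mathcal{T}\psi_0)_{L_2(\Om)}=1\neq0$), whereas you read the principal part off the Riesz projection $\Pi$ and compute $\Pi$ by biorthogonality against the adjoint chain $\mathcal{T}\psi_0,\mathcal{T}\phi_0$. Your structural identities $\mathcal{P}_{-1}=-\Pi$, $\mathcal{P}_{-2}=-(\Op_\a-\l_0)\Pi$ are correct, and $\mathcal{P}_{-2}=\psi_0\ell_2$ comes out unconditionally in both arguments.

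The point you flag as delicate is a genuine gap, and it cannot be closed the way you propose: the shift $\phi_0\mapsto\phi_0-\tfrac12 m\psi_0$ with $m:=(\phi_0,\mathcal{T}\phi_0)_{L_2(\Om)}$ changes $(\phi_0,\psi_0)_{L_2(\Om)}$ by $-\tfrac12 m\|\psi_0\|^2_{L_2(\Om)}$, so it destroys the second normalization in (\ref{1.10a}) whenever $m\neq0$; conversely, $(\phi_0,\psi_0)_{L_2(\Om)}=0$ pins $\phi_0$ down uniquely, so $m$ is a fixed number (real, by the parity statement of Lemma~\ref{lm2.4}, since $\RE\phi_0\IM\phi_0$ is odd in $x_2$) with no reason to vanish --- the two normalizations are mutually exclusive unless $m=0$. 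What your biorthogonality computation actually proves is that under (\ref{1.10a}), (\ref{2.6a}) the correct coefficient is $\mathcal{P}_{-1}=\phi_0\ell_2+\psi_0(\ell_1-m\ell_2)$, i.e.\ (\ref{2.9}) holds only modulo the rank-one term $-m\,\psi_0\ell_2$. You should know that the paper's own proof slides over exactly this term: equation (\ref{2.6}) as printed is not even solvable (by (\ref{2.3}) and (\ref{2.6a}) its solvability condition reads $(f,\mathcal{T}\psi_0)_{L_2(\Om)}=0$; the right-hand side should be $f+\phi_0\ell_2 f$), and the closing chain of identities drops the pairing of the $\ell_2 f$-term against $\mathcal{T}\phi_0$, although $(\psi_0,\mathcal{T}\phi_0)_{L_2(\Om)}=(\phi_0,\mathcal{T}\psi_0)_{L_2(\Om)}=1$ (both equal $\int_\Om\psi_0\phi_0\di x$) and $(\phi_0,\mathcal{T}\phi_0)_{L_2(\Om)}$ need not vanish. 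So your attempt is in fact more careful than the paper's proof; to make the lemma literally true one must either keep (\ref{1.10a}) and add $-m\,\psi_0\ell_2$ to $\mathcal{P}_{-1}$, or trade $(\phi_0,\psi_0)_{L_2(\Om)}=0$ for $(\phi_0,\mathcal{T}\phi_0)_{L_2(\Om)}=0$ as you suggest. Either repair is harmless downstream: in the proof of Theorem~\ref{th2} it amounts to $\ell_1\mapsto\ell_1-m\ell_2$ with $m$ real, which leaves $A_{21}$, hence the leading asymptotics (\ref{1.8}), and the reality argument for the $A_{ij}$ unchanged.
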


\begin{proof}
We know by \cite[Ch. I\!I\!I, Sec. 6.5]{Kato} (see also the remark on space $\mathbf{M}'(0)$ in the proof of Theorem~1.7 in \cite[Ch. V\!I\!I, Sec. 1.3]{Kato}) that
$(\Op_\a-\l)^{-1}$ can be expanded into the Laurent series
\begin{equation*}
(\Op_\a-\l)^{-1}=\sum\limits_{n=1}^{N} \frac{\mathcal{P}_{-n}}{(\l-\l_0)^n}+\mathcal{R}_\a(\l),
\end{equation*}
where $N$ is a fixed number independent of $\l$, $\mathcal{R}_\a$ is the reduced resolvent which is a bounded and holomorphic in $\l$ operator. Given any $f\in L_2(\Om)$, we then have
\begin{equation*}
u=(\Op_\a-\l)^{-1}f=\sum\limits_{n=1}^{N} \frac{u_{-n}}{(\l-\l_0)^n}+\sum\limits_{n=0}^{\infty} (\l-\l_0)^n u_n.
\end{equation*}
We substitute this formula into the equation $(\Op_\a-\l)u=f$ and equate the coefficients at the like powers of $(\l-\l_0)$:
\begin{align}
&(\Op_\a-\l_0)u_{-N}=0,\quad
(\Op_\a-\l_0)u_{-k}=u_{-k-1},\quad k=1,\ldots,N-1,\nonumber
\\
&(\Op_\a-\l_0)u_0=f+u_{-1},\quad (\Op_\a-\l_0)u_1=u_0.\label{2.8}
\end{align}
It implies that $u_{-N}=\psi_0\ell_2$, $u_{-N+1}=\phi_0\ell_2+\psi_0\ell_1$, where $\ell_i$ are some functionals on $L_2(\Om)$. If $N>2$, then by (\ref{1.7}) and Lemma~\ref{lm2.1} the equation for $u_{-N+2}$ is unsolvable. Hence, we can assume $N=2$.
Writing then the solvability condition (\ref{2.3}) for equations (\ref{2.8}) and taking into consideration the identity in (\ref{1.10a}), we arrive easily to the formula for $\ell_2$ in (\ref{2.9}) and
\begin{equation}\label{2.8a}
\ell_1 f:=- (U_0,\mathcal{T}\psi_0)_{L_2(\Om)},
\end{equation}
where  $U_0$ is the solution to the equation
\begin{equation}\label{2.6}
(\Op_\a-\l_0)U_0=f+\psi_0\ell_2 f
\end{equation}
satisfying
\begin{equation}\label{2.7}
(U_0,\psi_0)_{L_2(\Om)}=0.
\end{equation}
It follows from (\ref{1.3}) and (\ref{1.6}) that
\begin{align*}
(U_0,\mathcal{T}\psi_0)_{L_2(\Om)}=&(U_0,\mathcal{T}(\Op_\a-\l_0)\phi_0)_{L_2(\Om)} = \big(U_0,(\Op_\a-\l_0)^*\mathcal{T}\phi_0\big)_{L_2(\Om)}
\\
=&\big((\Op_\a-\l_0)U_0,\mathcal{T}\phi_0\big)_{L_2(\Om)} =\big(f+\psi_0\ell_2f,\mathcal{T}\psi_0\big)_{L_2(\Om)}.
\end{align*}
These identities and (\ref{2.6a}), (\ref{2.8a}) imply formula (\ref{2.6}) for $\ell_1$.
\end{proof}

\begin{lemma}\label{lm2.3}
Suppose the hypothesis of Theorem~\ref{th1}.
Then for $\l$ close to $\l_0$ the resolvent $(\Op_\a-\l)^{-1}$ can be represented as
\begin{gather}
(\Op_\a-\l)^{-1}=\frac{\mathcal{P}_{-1}}{\l-\l_0} + \mathcal{R}_\a(\l),\label{2.10}
\\
 \mathcal{P}_{-1}=\psi_0^+ \ell_+ + \psi_0^-\ell_-,\quad
\ell_\pm f:=- (f,\mathcal{T}\psi_0^\pm)_{L_2(\Om)},\label{2.11}
\end{gather}
where $\mathcal{R}_\a(\l)$ is the reduced resolvent which is a bounded and holomorphic in $\l$ operator.
\end{lemma}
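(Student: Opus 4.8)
The plan is to follow the scheme of the proof of Lemma~\ref{lm2.2}, the crucial simplification being that under the hypothesis of Theorem~\ref{th1} the eigenvalue $\l_0$ is semisimple (its geometric and algebraic multiplicities both equal two), so that the Laurent expansion of the resolvent has only a first order pole. First I would invoke \cite[Ch.~I\!I\!I, Sec.~6.5]{Kato} to represent, for $\l$ close to $\l_0$,
\begin{equation*}
(\Op_\a-\l)^{-1}=\sum\limits_{n=1}^{N}\frac{\mathcal{P}_{-n}}{(\l-\l_0)^n}+\mathcal{R}_\a(\l),
\end{equation*}
with $N$ independent of $\l$ and $\mathcal{R}_\a$ bounded and holomorphic. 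Applying this to an arbitrary $f\in L_2(\Om)$, writing $u=(\Op_\a-\l)^{-1}f$ as a Laurent series, substituting into $(\Op_\a-\l)u=f$ and equating like powers of $(\l-\l_0)$ yields the same recursion (\ref{2.8}); in particular $(\Op_\a-\l_0)u_{-N}=0$ and $(\Op_\a-\l_0)u_{-k}=u_{-k-1}$ for $k=1,\ldots,N-1$, together with $(\Op_\a-\l_0)u_0=f+u_{-1}$.

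Since the geometric multiplicity of $\l_0$ is two, the kernel of $\Op_\a-\l_0$ is spanned by $\psi_0^+$ and $\psi_0^-$, and hence the leading coefficient has the form $u_{-N}=\psi_0^+\ell_+ +\psi_0^-\ell_-$ with some functionals $\ell_\pm$ on $L_2(\Om)$. The key step is to show $N=1$. For this I would use the normalisation (\ref{1.9}): a direct computation gives $(\psi_0^-,\mathcal{T}\psi_0^+)_{L_2(\Om)}=(\psi_0^+,\mathcal{T}\psi_0^-)_{L_2(\Om)}=0$, so that the Gram matrix $\big[(\psi_0^i,\mathcal{T}\psi_0^j)_{L_2(\Om)}\big]$ is the identity. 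If $N\geqslant2$, then the equation $(\Op_\a-\l_0)u_{-N+1}=u_{-N}$ must be solvable, and by Lemma~\ref{lm2.1} this forces $(u_{-N},\mathcal{T}\psi_0^\pm)_{L_2(\Om)}=0$; by the identity Gram matrix these conditions read $\ell_+ f=\ell_- f=0$ for all $f$, i.e. $\mathcal{P}_{-N}\equiv0$, contradicting that $N$ is the order of the pole. Hence $N=1$ and $\mathcal{P}_{-1}=\psi_0^+\ell_+ +\psi_0^-\ell_-$.

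It then remains to identify the functionals $\ell_\pm$. Writing the solvability condition (\ref{2.3}) of Lemma~\ref{lm2.1} for the equation $(\Op_\a-\l_0)u_0=f+\psi_0^+\ell_+ f+\psi_0^-\ell_- f$ and again using the identity Gram matrix, I would obtain at once $\ell_\pm f=-(f,\mathcal{T}\psi_0^\pm)_{L_2(\Om)}$, which is precisely (\ref{2.11}). The only point requiring genuine care is the verification that (\ref{1.9}) indeed makes the relevant $2\times2$ Gram matrix equal to the identity, since it is exactly the vanishing of its off diagonal entries that both decouples the two functionals and rules out a higher order pole; the rest is the routine bookkeeping already performed in Lemma~\ref{lm2.2}.
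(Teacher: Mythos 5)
Your proposal is correct and takes essentially the same route as the paper: the paper's proof is a one-line remark that the argument of Lemma~\ref{lm2.2} carries over, the key point being that by (\ref{1.9}) and Lemma~\ref{lm2.1} the equations $(\Op_\a-\l_0)u=\psi_0^\pm$ (and hence, as your Gram-matrix computation makes explicit, any equation with a nontrivial combination of $\psi_0^\pm$ on the right) are unsolvable, which is exactly your argument excluding $N\geqslant 2$. Your identification $\ell_\pm f=-(f,\mathcal{T}\psi_0^\pm)_{L_2(\Om)}$ from the solvability condition at order zero is the same bookkeeping as in Lemma~\ref{lm2.2}.
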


The proof of this lemma is similar to that of Lemma~\ref{lm2.2}, we just should bear in mind that due to (\ref{1.9}) and Lemma~\ref{lm2.1} the equations
\begin{equation*}
(\Op_\a-\l_0)u=\psi_0^\pm
\end{equation*}
are unsolvable.

We proceed to the proofs of Theorems~\ref{th1},~\ref{th2},~\ref{th3}.

\begin{proof}[Proof of Theorem~\ref{th2}] The proof is based on the modified version of Birman-Schwinger principle suggested in \cite{G} in the form developed in \cite{MSb}. In view of (\ref{2.1}), the eigenvalue equation for $\Op_{\a+\e\b}$ is equivalent to
the same equation for $\Op_\a-\e\mathcal{L}_\e$. The latter equation can be written as
\begin{equation}\label{2.12}
(\Op_\a-\l_\e)\psi_\e=\e\mathcal{L}_\e\psi_\e.
\end{equation}
We then invert the operator $(\Op_\a-\l_\e)$ by Lemma~\ref{lm2.2} and obtain
\begin{equation*}
\psi_\e=\e\frac{\mathcal{P}_{-2}\mathcal{L}_\e\psi_\e}{(\l_\e-\l_0)^2}+\e \frac{\mathcal{P}_{-1}\mathcal{L}_\e\psi_\e}{\l_\e-\l_0}+\e\mathcal{R}_\a(\l_\e)\psi_\e.
\end{equation*}
By Lemma~\ref{lm2.2} the operator $\mathcal{R}_\a(\l)$ is bounded uniformly in $\l$ close to $\l_0$ and hence the inverse $\mathcal{A}(z,\e):=\big(\I-\e \mathcal{R}_\a(\l_0+z)\big)^{-1}$ is well-defined and is uniformly bounded for all $\l$ close to $\l_0$ and for all sufficiently small $\e$. We apply this operator to the latter equation and get
\begin{equation}\label{2.13}
\psi_\e=\frac{\e}{z_\e^2} \mathcal{A}(\l_0+z_\e,\e)
\mathcal{P}_{-2}\mathcal{L}_\e\psi_\e+ \frac{\e}{z_\e}\mathcal{A}(\l_0+z_\e,\e) \mathcal{P}_{-1}\mathcal{L}_\e\psi_\e,
\end{equation}
where we denote $z_\e:=\l_\e-\l_0$.
Then we apply functionals $\ell_2\mathcal{L}_\e$, $\ell_1\mathcal{L}_\e$ to the obtained equation and it results in
\begin{equation}\label{2.15}
\begin{aligned}
&\left(\frac{\e}{z_\e}A_{11}(z_\e,\e)-1\right)X_1 + \frac{\e}{z_\e^2} \big(A_{11}(z_\e,\e)+z_\e A_{12}(z_\e,\e)\big)X_2=0,
\\
&\hphantom{((}\frac{\e}{z_\e} A_{21}(z_\e,\e)X_1+ \left(\frac{\e}{z_\e^2}\big(A_{21}(z_\e,\e)+z_\e A_{22}(z_\e,\e)\big)-1\right)X_2=0,
\end{aligned}
\end{equation}
where $X_i=\ell_i\mathcal{L}_\e\psi_\e$, and
\begin{equation*}
A_{i1}(z,\e):=\ell_i \mathcal{L}_\e \mathcal{A}(\l_0+z,\e)\psi_0,\quad A_{i2}(z,\e):=\ell_i \mathcal{L}_\e \mathcal{A}(\l_0+z,\e)\phi_0,\quad i=1,2.
\end{equation*}
The obtained system of equations is linear w.r.t. $(X_1,X_2)$. We need a non-zero solution to this system since otherwise by (\ref{2.13}) we would get $\psi_\e=0$ and $\psi_\e$ then can not be an eigenfunction. System (\ref{2.15}) has a nonzero solution if its determinant vanishes. It implies the equation
\begin{align*}
z_\e^2&-\e\big(A_{11}(z_\e,\e)+A_{22}(z_\e,\e))z_\e
\\
&-\e A_{21}(z_\e,\e)+\e^2\big(A_{11}(z_\e,\e)A_{22}(z_\e,\e) -A_{12}(z_\e,\e)A_{21}(z_\e,\e)\big)=0,
\end{align*}
which is equivalent to the following two
\begin{equation}\label{2.14}
z_\e=G_\pm(z_\e,\e^{1/2}),
\end{equation}
where
\begin{equation}\label{2.20}
\begin{aligned}
&G_\pm(z,\k):=
\frac{\k^2\big(A_{11}(z,\k^2)+A_{22}(z,\k^2)\big)}{2}
\\
&\pm \k \bigg(A_{21}(z,\k^2)
+\frac{\e}{4}\big(A_{11}(z,\k^2)-A_{22}(z,\k^2)\big)^2
+\e A_{12}(z,\k^2) A_{21}(z,\k^2)
\bigg)^{1/2}.
\end{aligned}
\end{equation}
Here the branch of the square root is fixed by the restriction $1^{1/2}=1$. It is clear that the functions $A_{ij}$ are jointly holomorphic w.r.t. sufficiently small $z$ and $\e$. Moreover, by (\ref{2.1a})
\begin{equation}\label{2.17}
A_{21}(0,\e)=\ell_2\mathcal{L_\e}\mathcal{A}(0,\e)\psi_0=\iu \ell_2 \left(-2\b'x_2\frac{\p\hphantom{x}}{\p x_1}-2\b\frac{\p\hphantom{x}}{\p x_2}-\b'' x_2\right)\psi_0+\Odr(\e).
\end{equation}
To calculate the first term in the right hand side of this identity, we first observe that by the equation for $\psi_0$ we have
\begin{equation*}
-\left(2\b'x_2\frac{\p\hphantom{x}}{\p x_1}+2\b\frac{\p\hphantom{x}}{\p x_2}+\b'' x_2\right)\psi_0 = -(\D+\l_0)\b x_2\psi_0=:g.
\end{equation*}
Now we find $\iu\ell_2 g$ by integration by parts
\begin{equation}\label{2.18}
\begin{aligned}
\iu \ell_2 g=& \int\limits_{\Om} \psi_0 (\D+\l_0) \b x_2\psi_0\di x
= \iu\int\limits_{\G_+} \left(\psi_0\frac{\p\hphantom{x}}{\p x_2}\b x_2\psi_0-\b x_2\psi_0\frac{\p\psi_0}{\p x_2}\right) \di x_1
\\
& - \iu\int\limits_{\G_-} \left(\psi_0\frac{\p\hphantom{x}}{\p x_2}\b x_2\psi_0-\b x_2\psi_0\frac{\p\psi_0}{\p x_2}\right) \di x_1
= \iu\int\limits_{\G_+} \b\psi_0^2 \di x_1 - \iu\int\limits_{\G_-} \b\psi_0^2 \di x_1.
\end{aligned}
\end{equation}
 Together with (\ref{2.18}) it implies
\begin{equation}\label{2.21}
\iu\ell_2 g=-  4 \int\limits_{\G_+} \b\RE\psi_0\IM\psi_0 \di x_1.
\end{equation}
Hence, by (\ref{2.20}), (\ref{2.18}), (\ref{1.11}), and the properties of functions $A_{ij}$ we conclude that functions $G_\pm$ are jointly holomorphic w.r.t. sufficiently small $z$ and $\k$. Applying then Rouch\'e theorem as it was done in \cite[Sec. 4]{MSb}, we conclude that for all sufficiently small $\k$ each of the functions $z\mapsto z-G_\pm(z,\k)$ has a simple zero $z_\pm(\k)$ in a small neighborhood of the origin. By the implicit function theorem these zeroes are holomorphic w.r.t. $\k$. Thus, the desired solutions to equations (\ref{2.14}) are $z_\pm(\e^{1/2})$, and these functions are holomorphic w.r.t. $\e^{1/2}$. Moreover, it follows from (\ref{2.14}), (\ref{2.20}), (\ref{2.17}), (\ref{2.18}), (\ref{2.21}) that
\begin{equation*}
z_\pm(\e^{1/2})=G_\pm(0,\e^{1/2})+\Odr(\e)=\pm \e^{1/2} A_{21}^{1/2}(0,\e)+\Odr(\e)
\end{equation*}
and then the sought eigenvalues are $\l_\e^\pm=\l_0+z_\pm(\e^{1/2})$. These eigenvalues are holomorphic w.r.t. $\e^{1/2}$ and obey (\ref{1.8}). Let us prove that these eigenvalues are real as (\ref{1.17}) holds true and are complex once (\ref{1.18}) is satisfied. The latter statement follows easily from formulae (\ref{1.8}) since in this case $\e^{1/2}\l_{1/2}^\pm$ are two imaginary numbers. To prove the reality, as one can easily make sure, it is sufficient to prove that functions $G_\pm(z,\k)$ are real for real $z$ and $\k$. Then the existence of a real root is implied easily by the implicit function theorem for real functions.

In view of definition (\ref{2.20}) of $G_\pm$, the desired fact is yielded by the similar reality of $A_{ij}$. Let us prove the latter.

It follows from Lemma~\ref{lm2.3} that for each $f\in L_2(\Om)$ the function
\begin{equation*}
\mathcal{R}_\a(\l)f=(\Op_\a-\l)^{-1}-\frac{\mathcal{P}_{-2}}{(\l-\l_0)^2} -\frac{\mathcal{P}_{-1}}{\l-\l_0}
\end{equation*}
solves the equation
\begin{equation}\label{2.29}
(\Op_\a-\l)\mathcal{R}_\a(\l)f=f+\psi_0\ell_1 f+ \phi_0\ell_2 f.
\end{equation}
Employing definition (\ref{2.1a}) of $\mathcal{L}_\e$, we check easily that $\mathcal{PT}\mathcal{L}_\e= \mathcal{L}_\e\mathcal{PT}$. This identity and (\ref{2.19}), (\ref{2.29}) yield that for $z\in\mathds{R}$, $\k\in\mathds{R}$
\begin{equation*}
\mathcal{PT}\mathcal{L}_\e \mathcal{A}(\l_0+z,\k)\psi_0=\mathcal{L}_\e \mathcal{A}(\l_0+z,\k)\psi_0,\quad \mathcal{PT}\mathcal{L}_\e \mathcal{A}(\l_0+z,\k)\phi_0=\mathcal{L}_\e \mathcal{A}(\l_0+z,\k)\phi_0.
\end{equation*}
Using  (\ref{2.19}) once again, for $z\in\mathds{R}$, $\k\in\mathds{R}$ we get
\begin{align*}
\overline{A_{11}(z,\k)}=&\big(\mathcal{PT}\mathcal{L}_\e \mathcal{A}(\l_0+z,\k)\psi_0,\mathcal{P}\psi_0\big)_{L_2(\Om)}
\\
=&\big(\mathcal{T}\mathcal{L}_\e \mathcal{A}(\l_0+z,\k)\psi_0,\mathcal{T}\psi_0\big)_{L_2(\Om)}=A_{11}(z,\k).
\end{align*}
The reality of other functions $A_{ij}$ can be proven in the same way. The proof is complete.
\end{proof}

\begin{proof}[Proof of Theorem~\ref{th1}] The main ideas here are the same as in the proof of Theorem~\ref{th2}, so, we focus only on the main milestones. We again begin with (\ref{2.1}) and invert $(\Op_\e-\l_\e)$ by Lemma~\ref{lm2.4}. It leads us  to an analogue of equation (\ref{2.13}),
\begin{equation}\label{2.25}
\psi_\e=\frac{\e}{z_\e} \mathcal{A}(\l_0+z_\e,\e)\mathcal{P}_{-1}\mathcal{L}_\e\psi_\e,
\end{equation}
where the operator $\mathcal{A}$ is introduced in the same way as above. We apply then functionals $\ell_\pm \mathcal{L}_\e$ to this equation
\begin{gather}
\begin{aligned}
&\left(\frac{\e}{z_\e}B_{11}(z_\e,\e)-1\right)X_1 + \frac{\e}{z_\e} B_{12}(z_\e,\e) X_2=0,
\\
&\hphantom{11} \frac{\e}{z_\e}B_{21}(z_\e,\e)X_1 +\left(\frac{\e}{z_\e}B_{22}(z_\e,\e)-1\right) X_2=0,
\end{aligned}\label{2.26}
\\
\begin{aligned}
&B_{11}(z,\e):=\ell_+ \mathcal{L}_\e \mathcal{A}(\l_0+z,\e)\psi_0^+, \quad B_{12}(z,\e):=\ell_+ \mathcal{L}_\e \mathcal{A}(\l_0+z,\e)\psi_0^-,
\\
&B_{21}(z,\e):=\ell_- \mathcal{L}_\e \mathcal{A}(\l_0+z,\e)\psi_0^+, \quad B_{22}(z,\e):=\ell_- \mathcal{L}_\e \mathcal{A}(\l_0+z,\e)\psi_0^-.
\end{aligned}
\nonumber
\end{gather}
The determinant of system (\ref{2.26}) should again vanish and it implies the equation
\begin{equation*}
z_\e^2-\e\big(B_{11}(z_\e,\e)+B_{22}(z_\e,\e)\big) +\e^2\big(B_{11}(z_\e,\e)B_{22}(z_\e,\e)-B_{12}(z_\e,\e)B_{21}(z_\e,\e)\big)=0,
\end{equation*}
which splits into other two
\begin{gather}\label{2.27}
z_\e=Q_\pm(z_\e,\e),
\\
\begin{aligned}
Q_\pm(z,\e):=&\frac{\e}{2}\big(B_{11}(z_\e,\e)+B_{22}(z_\e,\e)\big) \\
&\pm \frac{\e}{2} \Big((B_{11}(z,\e)-B_{22}(z,\e))^2+4 B_{12}(z,\e) B_{21}(z,\e)\Big)^{1/2}.
\end{aligned}
\nonumber
\end{gather}
Here the branch of the square root is fixed by the restriction $1^{1/2}=1$. Let us prove that this square root is jointly holomorphic w.r.t. $z$ and $\e$. Integrating by parts as in (\ref{2.18}) and employing (\ref{1.1}), one can make easily sure that
\begin{equation}\label{2.28}
\begin{aligned}
B_{ii}=b_{ii}+\Odr(\e),\quad i=1,2,\quad B_{12}(0,\e)=b_{12}+\Odr(\e),\quad B_{21}(0,\e)=b_{21}+\Odr(\e).
\end{aligned}
\end{equation}
Hence, by assumption (\ref{1.4a}), functions $Q_\pm$ are jointly holomorphic w.r.t. $z$ and $\e$. Proceeding now as in the proof of Theorem~\ref{th2}, we arrive at the statement of Theorem~\ref{th1}.
\end{proof}

\begin{proof}[Proof of Theorem~\ref{th3}] Denote 
\begin{equation*}
\psi(x):=\frac{1}{2}x_1\int\limits_{-\infty}^{x_1} t\psi_0(t,x_2)\di t.
\end{equation*}
In view of (\ref{1.12}) this function is well-defined. Throughout the proof we shall deal with several integrals of such kind and all of them will be well-defined due to (\ref{1.12}). In what follows we shall not stress this fact anymore.

Employing the equation for $\psi_0$, integrating by parts, and bearing in mind estimates (\ref{1.12}), we get
\begin{align*}
(\D+\l_0)\psi =&\psi_0+\frac{1}{2}x_1\frac{\p\psi_0}{\p x_1} + \frac{1}{2}x_1\int\limits_{-\infty}^{x_1} t\left(\frac{\p^2\hphantom{t}}{\p x_2^2}+\l_0\right)\psi_0(t,x_2)\di t
\\
=&\psi_0+\frac{1}{2}x_1\frac{\p\psi_0}{\p x_1} -
\frac{1}{2} x_1\int\limits_{-\infty}^{x_1} \frac{\p^2\psi_0}{\p x_1^2}(t, x_2)\di t=\psi_0.
\end{align*}
The proven equation for $\psi$ allows us to integrate once again,
\begin{align*}
\int\limits_{\Om} \psi_0^2\di x=&\int\limits_{\Om} \psi_0(\D+\l_0) \psi\di x= \int\limits_{\G_+} \left(\psi_0\frac{\p\psi}{\p x_2}-\psi\frac{\p\psi_0}{\p x_2}\right)\di x_1-\int\limits_{\G_-} \left(\psi_0\frac{\p\psi}{\p x_2}-\psi\frac{\p\psi_0}{\p x_2}\right)\di x_1
\\
=&\int\limits_{\G_+} \psi_0\left(\frac{\p\psi}{\p x_2}+\iu\a\psi\right)\di x_1-\int\limits_{\G_-} \psi_0 \left(\frac{\p\psi}{\p x_2}+\iu\a\psi\right)\di x_1.
\end{align*}
Now we employ identity (\ref{2.19}) and boundary condition (\ref{1.1}) for $\psi_0$ to simplify the sum of these integrals,
\begin{align*}
\int\limits_{\Om} \psi_0^2\di x=&-\int\limits_{\G_+} \di x_1 \RE\psi_0(x_1,d) x_1 \int\limits_{-\infty}^{x_1} \big(\a(x_1)-\a(y_1)\big)\IM\psi_0(y_1,d)\di y_1 
\\
&-\int\limits_{\G_+} \di x_1 \IM\psi_0(x_1,d) x_1 \int\limits_{-\infty}^{x_1} \big(\a(x_1)-\a(y_1)\big)\RE\psi_0(y_1,d)\di y_1
\\
=&-\int\limits_{\G_+} \di x_1 \RE\psi_0(x_1,d) x_1 \int\limits_{-\infty}^{x_1} \big(\a(x_1)-\a(y_1)\big)\IM\psi_0(y_1,d)\di y_1
\\
&+ \int\limits_{\G_+} \di x_1 \RE\psi_0(x_1,d) x_1 \int\limits_{x_1}^{+\infty} \big(\a(x_1)-\a(y_1)\big)\IM\psi_0(y_1,d)\di y_1
\\
=&-\int\limits_{\mathds{R}^2} K(x_1,y_1) \big(\a(x_1)-\a(y_1)\big) \RE\psi_0(y_1,d)\IM\psi_0(y_1,d)\di x_1 \di y_1.
\end{align*}
By (\ref{2.3}) we then conclude that equation (\ref{1.6}) is solvable if and only if identity (\ref{1.23}) holds true.
\end{proof}
\begin{remark}
The idea of the latter proof was borrowed from the proof of Lemma~2.2 in \cite{AsAn03}, see also proof of Lemma~3.6 in \cite{MSb}.
\end{remark}

\section*{Acknowledgments}

The work is partially supported by RFBR, by a grant of the President of Russia for young scientists - doctors of science (MD-183.2014.1) and by the fellowship of Dynasty foundation for young mathematicians.

\end{document}